\shorttitle{Joint densities of FPT's through two boundaries} 
\begin{document}
\doublespacing 

\title{Joint densities of first hitting times of a \\diffusion process through two time dependent boundaries}  

\authorone[Department of Mathematics \lq\lq G. Peano\rq\rq, University of Torino]{\\Laura Sacerdote\\ Ottavia Telve\\ Cristina Zucca} %
\addressone{Department of Mathematics \lq\lq G. Peano\rq\rq, University of Torino, Via Carlo Alberto 10, 10123 Torino, Italy} 

\begin{abstract}
Consider a one dimensional diffusion process on the diffusion interval $I$ originated in $x_0\in I$. Let $a(t)$ and $b(t)$ be two continuous functions of $t$, $t>t_0$ with bounded derivatives and with $a(t)<b(t)$ and $a(t),b(t)\in I$, $\forall t>t_0$. 
We study the joint distribution of the two
random variables $T_a$ and $T_b$, first hitting times of the diffusion process through the two boundaries $a(t)$ and $b(t)$, respectively. 
We express the joint distribution of $T_a, T_b$ in terms of $P(T_a<t,T_a<T_b)$ and $P(T_b<t,T_a>T_b)$ and we determine a system of integral equations verified by these last probabilities. We propose a numerical algorithm to solve this system and we prove its convergence properties. Examples and modeling motivation for this study are also discussed.
\end{abstract}

\keywords{First-hitting time; diffusion process; Brownian motion; Ornstein Uhlenbeck process; copula.} 

\ams{60J60, 60G40}{60J70, 65R20.}

\section{Introduction}\label{Sect:1}
Exit times of diffusion processes from a strip play an important role in a
variety of application ranging from computer science to engineering, from biology to metrology or finance (cf. \cite{Giorno,DL,NFK,PTZ,Sm}). According to the features of the model, constant or time dependent thresholds may bound the considered process. 
Typical examples are quality models with two tolerance bands. Some parameter may control exit times from the strip, with different effects on the exit time from the upper or the lower bound. The knowledge of the joint exit times pdf  clarifies the role of these parameters.
Another example is given by the survival probability of a population in a finite capacity environment or by tumor growth models (cf. \cite{Giorno}). Similar problems arise in metrology when we need to maintain the atomic clock error bounded by two tolerance bands. 
Moreover, avoiding an excessive increase of the error is of primary importance to improve GPS
instruments (cf. \cite{GTSZ}). In this setting the knowledge of the relationship between exit times from the upper and the lower boundary may suggest improvements to the clock reliability by acting on some parameters of the model involved in the joint distribution. 
Other possible applications can be found in finance where the interest focuses on the dependency between the times to sell or buy options when the level of gain or loss is preassigned. 
A large literature exists for the study of the first passage time of one dimensional diffusion processes through a boundary and analytical, numerical and simulation methods have been studied both for the direct (cf. \cite{BNR,BSZ,GS,GS1,GSZ,Pe-1}) and the inverse problem (cf. \cite{ZucSac}). However, most of these papers focuses on the one boundary problem, while for the two boundary case the few analytical results published rely either on the Brownian motion (cf. \cite{Ors}) or particular time dependent boundaries, corresponding to special symmetries, for specific diffusions (cf. \cite{BGNR,DGNR}). The existing results generally focus on the first exit time from the strip, while our interest lies in the joint distribution of the times when the process first attains the upper and the lower boundary, respectively. This paper aims to cover this subject considering the joint distribution between these times. Some results, presented in a recent paper \cite{GNR}, are related with those on the Laplace transforms presented in this paper. However their focus is not the joint distribution of exit times from a strip.

The notation and the existing results that will be used in this paper are introduced in Section 2, while Sections 3 and 4 are devoted to the presentation of our results. 
In Section 3 we determine the expression of the joint distribution of the exit times from the upper and the lower boundary.
The results are expressed in terms of first hitting time through a single boundary and of the probability of crossing the upper (lower) boundary for the first time at some instant preceding $t$ before crossing the lower (upper) boundary. Note that these probabilities are generally unknown. We prove then that they are the unique solution of a system of Volterra integral equations of the first kind. We also show that there exists an equivalent system of Volterra equations of the second type. When the boundaries are constant the Laplace transform method can be applied to solve the system, since the integrals of such system are of convolution type. Here we introduce three equivalent representations of the Laplace transform. In the case of the Brownian motion and constant boundaries a closed form expression for the joint distribution of the exit times from a strip is known (cf. \cite{Borod}).

In Section 4 we propose a numerical scheme for the solution of the system of integral equations and we determine the order of convergence. This method works for both constant and time dependent boundaries. In the case of two constant boundaries the Laplace transforms (cf. \cite{Abate}) of the probability of crossing the upper (lower) boundary for the first time at some instant preceding $t$ before crossing the lower (upper) boundary can be numerically inverted. Finally in Section 5, we present a set of examples.

\section{Mathematical Background and Notations}\label{Sect:2}

Let $X=\{X(t),t\geq t_0\}$ be a one-dimensional regular time homogeneous diffusion process defined on a suitable probability space $(\Omega, \mathcal{A}, \mathbb{P})$ such that $P(X(t_0)=x_0)=1$ and with diffusion interval $I$, where $I$ is an interval of the form $(r_1,r_2)$, $(r_1,r_2]$, $[r_1,r_2)$ or $[r_1,r_2]$ where $r_1=-\infty$ and/or $r_2=+\infty$ are admissible when the diffusion interval is open. If not specified, the diffusion interval is open and the endpoints $r_1$ and $r_2$ are natural boundaries.
Let
$F_{X(t)}(x|y,\tau)=P(X(t)\leq x|X(\tau)=y)$
be the transition probability distribution function (pDf) of the process $X$ and let $f_{X(t)}(x|y,\tau)$ be the corresponding transition probability density function (pdf).

Let $a(t)$ be a continuous functions with bounded derivatives. 
We denote as $T_a$ the first hitting time of the stochastic process $X$ across a boundary $a(t)\in I$
\begin{equation}\label{eq:2.5}
T_a=\inf\{t\geq t_0,X(t)=a(t)\}
\end{equation}
Its pDf is 
\begin{equation}
F_{T_a}(t|x_0,t_0)=P(T_a\leq t|X(t_0)=x_0)
\end{equation}
and $f_{T_a}(t|x_0,t_0)$ is the corresponding pdf. 
In the case of two boundaries $a(t)<b(t)$, $\forall t$, we indicate with $T_a$ and $T_b$ the first hitting times of the stochastic process $X$ across the boundaries $a(t)$ and $b(t)$ respectively. Aim of this paper is to study the dependency properties of $(T_a,T_b)$,
i.e. to determine  
\begin{equation}
F_{T_a,T_b}(t,s|x_0,t_0)=P(T_a\leq t, T_b\leq s|X(t_0)=x_0),
\end{equation}
the joint pDf of $(T_a,T_b)$ and the corresponding joint pdf $f_{T_a,T_b}(t,s|x_0,t_0)$. 

We define the following densities that distinguish the first boundary reached between the two ones delimiting the strip
\begin{equation}\label{ga}
 g_{a}(t|x_0,t_0)dt=P(T_a\in dt, T_a<T_{b}|X(t_0)=x_0)
\end{equation}
and
\begin{equation}\label{gb}
 g_{b}(t|x_0,t_0)dt=P(T_b\in dt, T_a>T_{b}|X(t_0)=x_0).
\end{equation} 

For a standard Brownian motion $W=\{W(t),t\geq t_0\}$ the two densities $g_{a}(t|x_0,t_0)$ and $g_{b}(t|x_0,t_0)$ are known in closed form (cf. \cite{Borod}) when the boundaries are constant $a(t)=a$ and $b(t)=b$
\begin{eqnarray}\label{Wiener}
g_{a}(t|x_0,t_0)&=&\sum_{k=-\infty}^{\infty} \frac{x_0-a+2k(b-a)}{\sqrt{2\pi (t-t_0)^3}}e^{-\frac{(x_0-a+2k(b-a))^2}{2(t-t_0)}}\\
g_{b}(t|x_0,t_0)&=&\sum_{k=-\infty}^{\infty}\frac{b-x_0+2k(b-a)}{\sqrt{2\pi (t-t_0)^3}}e^{-\frac{(b-x_0+2k(b-a))^2}{2(t-t_0)}}\nonumber
\end{eqnarray} 
and the pdf and pDf of $T_a$ are
\begin{eqnarray}\label{T_a pdf}
f_{T_a}(t|x_0,t_0)&=&\frac{|a-x_0|}{\sqrt{2\pi (t-t_0)^3}}e^{-\frac{(a-x_0)^2}{2(t-t_0)}}\\
F_{T_a}(t|x_0,t_0)&=&1-\text{erf}\left(\frac{|a-x_0|}{\sqrt{2t}}\right).
\end{eqnarray}

The quantities (\ref{ga}) and (\ref{gb}) are useful for the computation of the joint density function of $T_a$ and $T_b$.
Two different instances arise according to the location of the starting point $X(t_0)=x_0$ with respect to the boundaries (cf. Figure \ref{Fig:2cases}). 
\begin{figure}[htp]
\centering
\includegraphics[height=6cm]{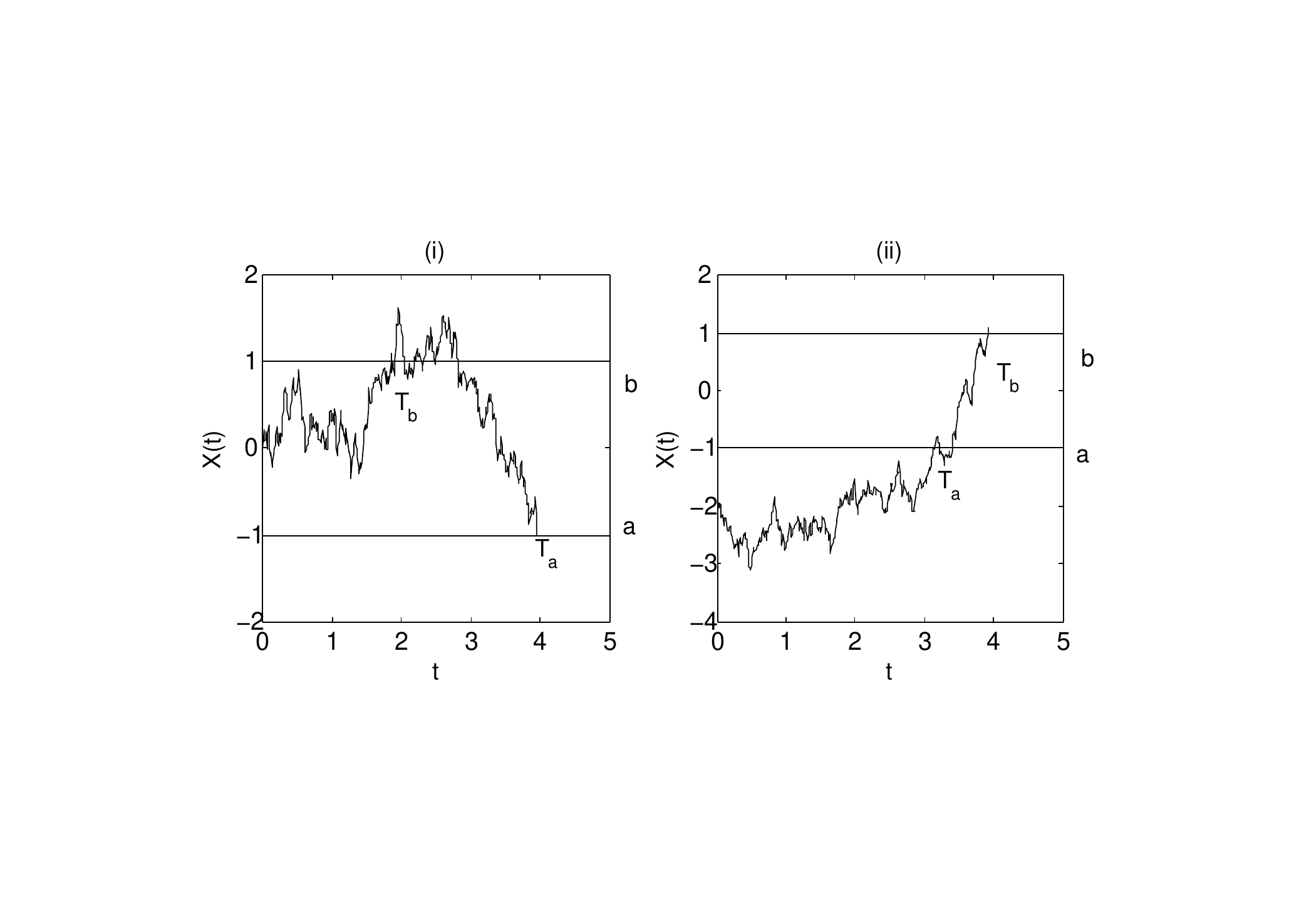}
\caption{Sample path of a stochastic process through two constant boundaries $a$ and $b$: i) $x_0\in (a,b)$; ii) $x_0\notin (a,b)$.}
\label{Fig:2cases}       
\end{figure}
It holds: 

\begin{theorem}\label{lem_general}
Let $X=\{X(t),t\geq t_0\}$ be a diffusion process such that $X(t_0)=x_0$ and let $a(t)$ and $b(t)$ be two continuous time dependent boundaries. 
\begin{enumerate}
	\item[i)] If $x_0<a(t_0)<b(t_0)$ and $a(t)<b(t)$ for each $t>t_0$ or $b(t_0)<a(t_0)<x_0$ and $b(t)<a(t)$ for each $t>t_0$, then
	\begin{equation}\label{densit1}
 f_{T_{a},T_{b}}\left(t,s|x_0,t_0\right)=\left\{
 \begin{array}{ll}
 0 &t\geq s \\
 f_{T_{a}}(t|x_0,t_0)f_{T_{b}}(s|a(t),t)  &t<s  \\
 \end{array} .\right.
 \end{equation}

	\item[ii)] If $a(t_0)<x_0<b(t_0)$ and $a(t)<b(t)$ for each $t>t_0$, then
\begin{equation}\label{densit2}
f_{T_{a} T_{b}}\left(t,s|x_0,t_0\right)=\left\{
\begin{array}{ll}
f_{T_{b}}(s|a(t),t) g_{a}(t|x_0,t_0)  &t<s  \\
0 &t=s \\
f_{T_{a}}(t|b(s),s) g_{b}(s|x_0,t_0)  &t>s \\
\end{array}
.\right.
\end{equation}
 \end{enumerate} 
\end{theorem}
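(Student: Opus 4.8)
The plan is to exploit two structural features of the problem: the almost-sure ordering of the hitting times forced by path continuity, and the factorization of the joint law forced by the strong Markov property together with the time homogeneity of $X$. I would work first at the level of the joint distribution function $F_{T_a,T_b}(t,s|x_0,t_0)$ and only differentiate at the end to recover the density.

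For part i), suppose $x_0<a(t_0)<b(t_0)$ with $a(t)<b(t)$ for all $t$ (the other configuration being symmetric). Since sample paths are continuous and $X(t_0)=x_0<a(t_0)$, any path that reaches $b(T_b)>a(T_b)$ at time $T_b$ must, by the intermediate value theorem applied to the continuous function $X(\cdot)-a(\cdot)$, previously cross $a$; hence $T_a<T_b$ almost surely, which immediately yields $f_{T_a,T_b}=0$ on $\{t\geq s\}$. On $\{t<s\}$ I would condition on $T_a=\tau$: by the strong Markov property at the stopping time $T_a$ the process starts afresh at $(a(\tau),\tau)$, and since the path has stayed strictly below $b$ up to $\tau$, the original $T_b$ coincides with the first hitting time of $b$ by this restarted process, whose law is $F_{T_b}(\,\cdot\,|a(\tau),\tau)$ by time homogeneity. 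This gives
\[
F_{T_a,T_b}(t,s|x_0,t_0)=\int_{t_0}^{t}F_{T_b}(s|a(\tau),\tau)\,f_{T_a}(\tau|x_0,t_0)\,d\tau,
\]
and differentiating once in $t$ and once in $s$ produces the claimed product $f_{T_a}(t|x_0,t_0)f_{T_b}(s|a(t),t)$; note that the factor $F_{T_b}(s|a(t),t)$ vanishes for $s\leq t$, so the zero branch is recovered automatically.

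For part ii), with $a(t_0)<x_0<b(t_0)$, both orderings of $T_a$ and $T_b$ occur, so I would split $F_{T_a,T_b}$ over the disjoint events $\{T_a<T_b\}$ and $\{T_b<T_a\}$; the diagonal $\{T_a=T_b\}$ carries no mass because $a(t)<b(t)$ forbids the path from meeting both boundaries at a single instant, which explains the vanishing entry at $t=s$. On $\{T_a<T_b\}$ the event that $a$ is reached first at time $\tau$ has density $g_a(\tau|x_0,t_0)$ by definition (\ref{ga}), and the same strong Markov restart from $(a(\tau),\tau)$ gives, after differentiation, $g_a(t|x_0,t_0)f_{T_b}(s|a(t),t)$ on $\{t<s\}$; symmetrically, restarting from $(b(\sigma),\sigma)$ on $\{T_b<T_a\}$ with $g_b(\sigma|x_0,t_0)$ from (\ref{gb}) yields $g_b(s|x_0,t_0)f_{T_a}(t|b(s),s)$ on $\{t>s\}$.

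The routine parts are the path-continuity ordering arguments and the differentiation under the integral sign, which are justified by the continuity and bounded-derivative hypotheses on $a,b$ and by the existence of the relevant transition and first-passage densities. The main obstacle is the careful application of the strong Markov property at the random times $T_a$ and $T_b$: I must verify that, on each ordering event, the post-stopping-time first hitting of the remaining boundary really coincides with the original hitting time (no earlier crossing has occurred) and that the conditional law of this restarted hitting time is exactly $F_{T_b}(\,\cdot\,|a(\tau),\tau)$ (respectively $F_{T_a}(\,\cdot\,|b(\sigma),\sigma)$), which is where time homogeneity and the regularity of $X$ enter decisively.
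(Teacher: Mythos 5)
Your proof is correct and follows exactly the route the paper indicates: the paper omits the proof, stating only that it is "straightforward using the strong Markov property," and your argument (path-continuity forcing the a.s.\ ordering in case i), nullity of the diagonal, and the strong Markov restart at $T_a$ or $T_b$ combined with the defective densities $g_a$, $g_b$ in case ii)) is precisely that argument carried out in detail. No gaps of substance; the points you flag as routine (differentiation under the integral, measurability of the ordering events with respect to the pre-stopping-time $\sigma$-field) are indeed the only technicalities left to check.
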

We omit the proof that is straightforward using the strong Markov property.
\begin{remark}
Note that the FPT pdf verifies the initial condition 
\begin{eqnarray}\label{limit}
\lim_{s\rightarrow t} f_{T_{b}}(s|a(t),t)=\lim_{t\rightarrow s}f_{T_{a}}(t|b(s),s)=0.
\end{eqnarray}
Furthermore, due to the differentiability of the boundaries, it holds (cf. \cite{RSS})
\begin{eqnarray}
\lim_{t\rightarrow s}F_{X(t)}(b(t)|b(s),s)&=&\lim_{t\rightarrow s}F_{X(t)}(a(t)|a(s),s)=\frac{1}{2}\label{limit Fa}\\
\lim_{t\rightarrow s}[1-F_{X(t)}(b(t)|a(s),s)]&=&\lim_{t\rightarrow s}F_{X(t)}(a(t)|b(s),s)=0\label{limit Fb}.
\end{eqnarray}  
\end{remark}

\begin{remark}
For some applications it might be interesting to determine the copula function (cf.\cite{Nelsen}) between $T_a$ and $T_b$. When the two densities $g_{a}(t|x_0,t_0)$ and $g_{b}(t|x_0,t_0)$ are known, use of (\ref{densit1}) and (\ref{densit2}) allows to determine this function.
\end{remark}

\section{System of integral equations}\label{Sect:3} 

The computation of $f_{T_{a} T_{b}}\left(t,s|x_0,t_0\right)$ involves the transition pdfs $f_{T_{b}}(s|a(t),t)$ and $f_{T_{a}}(t|b(s),s)$ and the terms $g_{a}(t|x_0,t_0)$ and $g_{b}(t|x_0,t_0)$. 
When the process is a linear regular diffusion, the transition pdf is available in closed form and, if the process is strictly linear, it is Gaussian. In the literature, transition pdf is also available for other regular diffusion processes, such as the Cox-Ingersoll-Ross model (also known as Feller process), the Bessel process or some instances of the Raleigh process (cf. \cite{GS, GNRS}). Further examples arise from space-time transformation of the Brownian motion (cf. \cite{R1}) or of the Cox-Ingersol-Ross process (cf. \cite{CR}). When closed form solutions are not available, the transition pdf is evaluated resorting to numerical methods, such as the numerical solution of the Kolmogorov equation \cite{LP, Sm1} or the numerical inversion of Fourier transforms \cite{VL}.
Unfortunately closed form expressions for the densities $g_{a}(t|x_0,t_0)$ and $g_{b}(t|x_0,t_0)$ are known only for the Brownian motion with constant boundaries (cf. \cite{Borod} formula 3.0.6) or for processes related to it through suitable transformations. Use of the  following theorem helps to overcome this problem.
\begin{theorem}
Let $X=\{X(t),t\geq t_0\}$ be a diffusion process such that $X(t_0)=x_0$. Let $a(t)$ and $b(t)$ be two time dependent boundaries with bounded derivatives such that $a(t_0)<x_0<b(t_0)$ and $a(t)<b(t)$ for each $t>t_0$. 
The pdf's $g_{a}(t|x_0,t_0)$ and $g_{b}(t|x_0,t_0)$ are solution of the following system of Volterra first kind integral equations
\begin{subequations}\label{Fortet2}
\begin{align}
 1-F_{X(t)}(b(t)|x_0,t_0)&=\int_{t_0}^t \left[1-F_{X(t)}(b(t)|a(\tau),\tau)\right]g_{a}(\tau|x_0,t_0) d\tau\label{Fortet2a}\\
 &+\int_{t_0}^t \left[1-F_{X(t)}(b(t)|b(\tau),\tau)\right]g_{b}(\tau|x_0,t_0)d\tau \nonumber\\
F_{X(t)}(a(t)|x_0,t_0)&=\int_{t_0}^t F_{X(t)}(a(t)|a(\tau),\tau)g_{a}(\tau|x_0,t_0) d\tau\label{Fortet2b}\\
 &+ \int_{t_0}^t F_{X(t)}(a(t)|b(\tau),\tau)g_{b}(\tau|x_0,t_0)d\tau.\nonumber
\end{align}
\end{subequations}
\end{theorem}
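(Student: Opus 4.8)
The plan is to derive both relations as Fortet-type equations, combining the law of total probability with respect to the first exit from the strip, the strong Markov property of $X$, and the continuity of its sample paths. Since the statement only asserts that $g_a$ and $g_b$ solve the system, it suffices to check that the probabilities introduced in (\ref{ga}) and (\ref{gb}) satisfy (\ref{Fortet2a}) and (\ref{Fortet2b}).

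First I would establish (\ref{Fortet2a}), whose left-hand side equals $P(X(t)>b(t)\,|\,X(t_0)=x_0)$. Because $x_0\in(a(t_0),b(t_0))$ and the paths of $X$ are continuous, the event $\{X(t)>b(t)\}$ forces the process to have left the strip before time $t$, so that $\min(T_a,T_b)<t$ there. Splitting according to which boundary is reached first — the two cases being mutually exclusive and exhaustive on this event, since $a(t)<b(t)$ rules out $T_a=T_b$ — and conditioning on the first exit instant $\tau$ yields
\begin{align*}
P(X(t)>b(t))&=\int_{t_0}^t P\!\left(X(t)>b(t)\mid T_a=\tau,\,T_a<T_b\right)g_a(\tau|x_0,t_0)\,d\tau\\
&\quad+\int_{t_0}^t P\!\left(X(t)>b(t)\mid T_b=\tau,\,T_b<T_a\right)g_b(\tau|x_0,t_0)\,d\tau.
\end{align*}

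Next I would evaluate the two conditional probabilities through the strong Markov property. On $\{T_a=\tau,\,T_a<T_b\}$ one has $X(\tau)=a(\tau)$, and the evolution after $\tau$ is that of the diffusion restarted at $a(\tau)$ at time $\tau$; the conditional probability therefore equals the unconstrained transition quantity $1-F_{X(t)}(b(t)|a(\tau),\tau)$. Likewise, on $\{T_b=\tau,\,T_b<T_a\}$ one has $X(\tau)=b(\tau)$ and the conditional probability is $1-F_{X(t)}(b(t)|b(\tau),\tau)$. Substituting these expressions gives exactly (\ref{Fortet2a}). Equation (\ref{Fortet2b}) follows by repeating the argument verbatim with $\{X(t)<a(t)\}$ in place of $\{X(t)>b(t)\}$: its probability is $F_{X(t)}(a(t)|x_0,t_0)$, the continuity of the transition distribution making the distinction between $<$ and $\le$ immaterial, and the two conditional probabilities become $F_{X(t)}(a(t)|a(\tau),\tau)$ and $F_{X(t)}(a(t)|b(\tau),\tau)$.

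The step requiring the most care is the decomposition itself. One must check that after the first exit the relevant conditional probability is the full transition probability from the boundary point, with no residual constraint: this is legitimate precisely because we condition on the first exit from the strip, so that every later excursion — including returns into the strip and subsequent crossings of the opposite boundary — is automatically captured by the unconstrained transition term. Verifying that path continuity indeed confines $\{X(t)>b(t)\}$ (respectively $\{X(t)<a(t)\}$) to $\{\min(T_a,T_b)<t\}$ then completes the justification.
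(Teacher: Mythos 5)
Your proof is correct and follows essentially the same route as the paper's: a total-probability decomposition over which boundary is attained first, weighted by $g_a$ and $g_b$, combined with the strong Markov property to replace the conditional probabilities by unconstrained transition quantities. The only (harmless) difference is that you apply this decomposition directly to the events $\{X(t)>b(t)\}$ and $\{X(t)<a(t)\}$, which path continuity confines to $\{\min(T_a,T_b)<t\}$, whereas the paper decomposes $P(X(t)\leq x \mid X(t_0)=x_0)$ for $x\notin[a(t),b(t)]$, differentiates in $x$ to get the density identity, and then integrates over $[b(t),\infty)$ and $(-\infty,a(t)]$.
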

\begin{proof}
For $t\in [0,\infty]$, conditioning on the boundary first attained by the process, for $x\notin[a(t),b(t)]$  we get
\begin{eqnarray}\nonumber
P\left(X(t)\leq x|X(t_0)=x_0\right)
 &=& \int^{t}_{t_0}P\left(X(t)\leq x|T_{a}<T_{b},T_{a}=\tau,X(t_0)=x_0\right)g_{a}(\tau|x_0,t_0) d\tau\nonumber\\
 &+&\int^{t}_{t_0}P\left(X(t)\leq x|T_{a}>T_{b},T_{b}=\tau,X(t_0)=x_0\right)g_{b}(\tau|x_0,t_0)d\tau\nonumber\\
 &=& \int^{t}_{t_0}P\left(X(t)\leq x|X(\tau)=a(\tau),X(t_0)=x_0\right)g_{a}(\tau|x_0,t_0) d\tau\nonumber\\
 &+&\int^{t}_{t_0}P\left(X(t)\leq x|X(\tau)=b(\tau),X(t_0)=x_0\right)g_{b}(\tau|x_0,t_0)d\tau\nonumber.
\end{eqnarray} 
Differentiating with respect to $x$ we obtain
\begin{eqnarray}\label{Fortetdensity}
f_{X(t)}(x|x_0,t_0)&=&\int_{t_0}^t f_{X(t)}(x|a(\tau),\tau)g_{a}(\tau|x_0,t_0) d\tau\\
 &+& \int_{t_0}^t f_{X(t)}(x|b(\tau),\tau)g_{b}(\tau|x_0,t_0)d\tau.\nonumber
\end{eqnarray} 
Integrating (\ref{Fortetdensity}) with respect to $x$ on the two subdomains $[b(t),\infty]$ and $[-\infty,a(t)]$ respectively, we get (\ref{Fortet2a}) and (\ref{Fortet2b}).
\end{proof}

\begin{remark}
Differentiating (\ref{Fortet2a}) and (\ref{Fortet2b}) with respect to $t$ and recalling \eqref{limit Fa} and \eqref{limit Fb} one gets
\begin{subequations}\label{Fortet2_2}
\begin{align}
 g_{b}(t|x_0,t_0)&=-2\frac{\partial F_{X(t)}(b(t)|x_0,t_0)}{\partial t}\label{Fortet2a_2}\\
 &+\int_{t_0}^t 2\left(\frac{\partial F_{X(t)}(b(t)|a(\tau),\tau)}{\partial t}g_{a}(\tau|x_0,t_0)+ \frac{\partial F_{X(t)}(b(t)|b(\tau),\tau)}{\partial t}g_{b}(\tau|x_0,t_0)\right)d\tau.\nonumber\\
  g_{a}(t|x_0,t_0)&=2\frac{\partial F_{X(t)}(a(t)|x_0,t_0)}{\partial t}\label{Fortet2b_2}\\
 &-\int_{t_0}^t 2\left(\frac{\partial F_{X(t)}(a(t)|a(\tau),\tau)}{\partial t}g_{a}(\tau|x_0,t_0)+ \frac{\partial F_{X(t)}(a(t)|b(\tau),\tau)}{\partial t}g_{b}(\tau|x_0,t_0)\right)d\tau.\nonumber
 \end{align}
\end{subequations}
This system of Volterra integral equations coincides with the one proposed in \cite{BGNR} if the kernel of the two equations is regularized.
\end{remark}

It holds
\begin{theorem}
The system (\ref{Fortet2}) has a unique continuous solution for $t>t_0$.
\end{theorem}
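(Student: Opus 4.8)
The plan is to reduce the first-kind Volterra system \eqref{Fortet2} to the equivalent second-kind system \eqref{Fortet2_2} and then to prove uniqueness for the latter by a Gronwall-type argument. Existence is already granted by the preceding theorem, whose probabilistic construction exhibits $g_a,g_b$ as a continuous solution of \eqref{Fortet2}; hence only uniqueness has to be established. First I would observe that \emph{any} continuous solution $(g_a,g_b)$ of \eqref{Fortet2} also solves \eqref{Fortet2_2}: differentiating \eqref{Fortet2a} and \eqref{Fortet2b} in $t$ via the Leibniz rule produces an endpoint term at $\tau=t$, which collapses to $\tfrac12 g_b(t)$ (resp. $\tfrac12 g_a(t)$) by the limits \eqref{limit Fa}, while the cross endpoint contribution vanishes by \eqref{limit Fb}; this is precisely the passage recorded in the Remark following \eqref{Fortet2_2}. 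The differentiation is legitimate provided the $t$-derivatives of the kernels are integrable in $\tau$, which is guaranteed by the kernel estimate below.

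Next I would set $u=g_a^{1}-g_a^{2}$ and $v=g_b^{1}-g_b^{2}$ for two continuous solutions and subtract the two copies of \eqref{Fortet2_2}. The inhomogeneous terms $-2\partial_t F_{X(t)}(b(t)|x_0,t_0)$ and $2\partial_t F_{X(t)}(a(t)|x_0,t_0)$ cancel, leaving the homogeneous linear system
\begin{align}
v(t)&=2\int_{t_0}^{t}\Bigl(\partial_t F_{X(t)}(b(t)|a(\tau),\tau)\,u(\tau)+\partial_t F_{X(t)}(b(t)|b(\tau),\tau)\,v(\tau)\Bigr)d\tau,\nonumber\\
u(t)&=-2\int_{t_0}^{t}\Bigl(\partial_t F_{X(t)}(a(t)|a(\tau),\tau)\,u(\tau)+\partial_t F_{X(t)}(a(t)|b(\tau),\tau)\,v(\tau)\Bigr)d\tau.\nonumber
\end{align}
The crucial analytic input, and the step I expect to be the main obstacle, is a uniform bound on these kernels near the diagonal $\tau=t$. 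Using the short-time estimates for the transition density $f_{X(t)}(\cdot|\cdot,\tau)$ of a regular diffusion together with the boundedness of $a',b'$, I would show that the \emph{same-boundary} kernels satisfy $|\partial_t F_{X(t)}(b(t)|b(\tau),\tau)|,\,|\partial_t F_{X(t)}(a(t)|a(\tau),\tau)|\le C(t-\tau)^{-1/2}$ — the genuine weak singularity reflected in \eqref{limit Fa} — whereas the \emph{cross} kernels are in fact bounded, because the Gaussian-type factor $\exp\{-(b(t)-a(\tau))^2/(2(t-\tau))\}$ suppresses the singularity in accordance with \eqref{limit Fb}. Establishing this diagonal behaviour rigorously for a general regular diffusion, rather than merely for Brownian motion, is where the real effort lies.

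Granting the estimate, I would close the argument by a weakly singular Gronwall lemma. Writing $w(t)=|u(t)|+|v(t)|$, the homogeneous system gives $w(t)\le C'\int_{t_0}^{t}(t-\tau)^{-1/2}w(\tau)\,d\tau$ on every compact $[t_0,T]$. Iterating this inequality once and applying Fubini together with the Beta integral $\int_{\sigma}^{t}(t-\tau)^{-1/2}(\tau-\sigma)^{-1/2}\,d\tau=B(\tfrac12,\tfrac12)=\pi$ turns the weakly singular kernel into a bounded one, yielding $w(t)\le \pi(C')^{2}\int_{t_0}^{t}w(\sigma)\,d\sigma$. The classical Gronwall inequality with vanishing forcing term then forces $w\equiv 0$ on $[t_0,T]$, so $g_a^{1}=g_a^{2}$ and $g_b^{1}=g_b^{2}$. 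Since $T>t_0$ is arbitrary, the continuous solution of \eqref{Fortet2} is unique for every $t>t_0$.
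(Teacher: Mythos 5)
Your proof is sound in outline but follows a genuinely different route from the paper's. Both arguments start by passing from the first-kind system \eqref{Fortet2} to the second-kind system \eqref{Fortet2_2}; after that they diverge. The paper does \emph{not} confront the weak singularity of the kernels head-on: following \cite{BNR}, it adds to each equation suitable multiples (via functions $\gamma_i,\eta_i$) of identities that vanish on solutions, namely \eqref{Fortetdensity} and \eqref{Fortet2}, producing the modified kernels $\Psi^1,\Psi^2$ of \eqref{Psi}; the diagonal singularity of the same-boundary kernels is cancelled by the choice of $\gamma_i,\eta_i$, the cross kernels are already nonsingular by the limits quoted from \cite{BGNR}, and then Theorem 3.11 of \cite{Li} yields existence \emph{and} uniqueness of a continuous solution in one stroke. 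You instead keep the singular kernels, take existence from the probabilistic construction of the preceding theorem, and prove uniqueness by subtracting two solutions and running an iterated Gronwall argument, using the Beta-integral identity $\int_{\sigma}^{t}(t-\tau)^{-1/2}(\tau-\sigma)^{-1/2}d\tau=\pi$ to convert the weakly singular inequality into a regular one. That iteration is correct, and your split of existence and uniqueness is legitimate. What each approach buys: yours is more self-contained (elementary Gronwall rather than a cited theorem) and avoids constructing the process-specific regularizing functions $\gamma_i,\eta_i$, which the paper only exhibits explicitly for the Ornstein--Uhlenbeck case; the paper's regularization, on the other hand, sidesteps exactly the step you flag as your main obstacle, namely proving the diagonal bound $|\partial_t F_{X(t)}(b(t)|b(\tau),\tau)|\le C(t-\tau)^{-1/2}$ for a general regular diffusion. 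You assert this bound (it is verifiable for Brownian motion and is the known rationale for the regularization in \cite{BNR}, \cite{RSS}), but you do not prove it, so your argument is complete only modulo that estimate --- a reliance on cited short-time behaviour of the transition density that is, in fairness, comparable to the paper's own reliance on \cite{BGNR} and \cite{Li}.
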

\begin{proof}
The system of Volterra integral equations of the first kind (\ref{Fortet2}) is equivalent to the system of Volterra integral equations of the second kind (\ref{Fortet2_2}) that can be written in matricial form
\begin{equation}
\text{\bfseries{g}}(t)=\text{\bfseries{h}}(t)+\int_{t_0}^t \text{\bfseries{k}}(t,\tau)\text{\bfseries{g}}(\tau) d\tau
\end{equation}
where 
\begin{equation}
\text{\bfseries{g}}(t)=\left[
\begin{array}{c}
g_{a}(t|x_0,t_0) \\
g_{b}(t|x_0,t_0)\end{array}
\right],
\end{equation}
\begin{equation}
\text{\bfseries{h}}(t)=\left[
\begin{array}{c}
-2\frac{\partial F_{X(t)}(b(t)|x_0,t_0)}{\partial t} \\
2\frac{\partial F_{X(t)}(a(t)|x_0,t_0)}{\partial t}\end{array}
\right],
\end{equation}
\begin{equation}
\text{\bfseries{k}}(t,\tau)=\left[
\begin{array}{cc}
-2\frac{\partial F_{X(t)}(b(t)|a(\tau),\tau)}{\partial t} & -2\frac{\partial F_{X(t)}(b(t)|b(\tau),\tau)}{\partial t}\\
2\frac{\partial F_{X(t)}(a(t)|a(\tau),\tau)}{\partial t}& 2\frac{\partial F_{X(t)}(a(t)|b(\tau),\tau)}{\partial t}
\end{array}
\right],
\end{equation}
Since the kernel $\text{\bfseries{k}}(t,\tau)$ is singular in $\tau=t$, we introduce an equivalent system with continuous kernel. Mimicking the method presented in \cite{BNR}, we introduce two couple of functions $\gamma_i(t)$ and $\eta_i(t)$, $i=1,2$, continuous in $[t_0,+\infty]$. Combining (\ref{Fortet2}), (\ref{Fortetdensity}) and (\ref{Fortet2_2}), together with $\gamma_i(t)$ and $\eta_i(t)$ we obtain
\begin{eqnarray}\label{eqconKR}
 g_{b}(t|x_0,t_0)=-\Psi^1(b(t)|x_0,t_0)+\int_{t_0}^t \left( \Psi^1(b(t)|a(\tau),\tau)g_{a}(\tau|x_0,t_0)+ \Psi^1(b(t)|b(\tau),\tau)g_{b}(\tau|x_0,t_0)\right)d\tau.\nonumber\\
  g_{a}(t|x_0,t_0)=\Psi^2(a(t)|x_0,t_0)-\int_{t_0}^t \left( \Psi^2(a(t)|a(\tau),\tau)g_{a}(\tau|x_0,t_0)+ \Psi^2(a(t)|b(\tau),\tau)g_{b}(\tau|x_0,t_0)\right)d\tau.
\end{eqnarray}
where
\begin{eqnarray}\label{Psi}
 \Psi^1(b(t)|x,s)&=&-2\frac{\partial F_{X(t)}(b(t)|x,s)}{\partial t}+\gamma_1(t)f_{X(t)}(b(t),t|x,s)+\eta_1(t) [1-F_{X(t)}(b(t)|x,s)]\nonumber\\
  \Psi^2(a(t)|x,s)&=&2\frac{\partial F_{X(t)}(a(t)|x,s)}{\partial t}-\gamma_2(t)f_{X(t)}(a(t),t|x,s)-\eta_2(t) F_{X(t)}(a(t)|x,s)
\end{eqnarray}

A suitable choice of $\gamma_i(t)$ and $\eta_i(t)$, $i=1,2$, makes $\Psi^1(b(t)|b(\tau),\tau)$ and $\Psi^2(a(t)|a(\tau),\tau)$ not singular. On the other hand, since (cf. \cite{BGNR})
\begin{eqnarray}
\lim_{\tau\rightarrow t}f_{X(t)}(b(t),t|a(\tau),\tau)&=\left.\lim_{\tau\rightarrow t}\frac{\partial}{\partial x}f_{X(t)}(x,t|a(\tau),\tau)\right|_{x=b(t)}=0\\
\lim_{\tau\rightarrow t}f_{X(t)}(a(t),t|b(\tau),\tau)&=\left.\lim_{\tau\rightarrow t}\frac{\partial}{\partial x}f_{X(t)}(x,t|b(\tau),\tau)\right|_{x=a(t)}=0\nonumber,
\end{eqnarray}
the kernels $\Psi^1(b(t)|a(\tau),\tau)$ and $\Psi^2(a(t)|b(\tau),\tau)$ are not singular. This makes possible to apply Theorem 3.11 of \cite{Li} to get the thesis.
\end{proof}

\begin{remark}
The functions $\gamma_i(t)$ and $\eta_i(t)$, $i=1,2$ can be determined. For example, for an Ornstein Uhlenbeck process characterized by the drift $\mu(t,x)=\alpha x+\beta$ and infinitesimal variance $\sigma(t,x)=\sigma$, where $\alpha$, $\beta$ and $\sigma>0$ are arbitrary real constants. The functions that regularize the kernels are
$\gamma_1(t)=1/2[\alpha b(t)+\beta -b'(t)]$, $\gamma_2(t)=1/2[\alpha a(t)+\beta -a'(t)]$ and $\eta_i(t)\equiv0$, $i=1,2$ (cf. \cite{BNR}).
\end{remark}
When the boundaries are constant it holds:
\begin{corollary}\label{Cor Laplace}
Let $X=\{X(t),t\geq t_0\}$ be a diffusion process such that $X(t_0)=x_0$ and let $a$ and $b$ be two constant boundaries such that $a<x_0<b$, then the following three expressions are equivalent for $g_{a}^{\lambda}(x_0)=\int_{t_0}^{+\infty}e^{-\lambda t}g_{a}(t|x_0,t_0)dt$ and $g_{b}^{\lambda}(x_0)=\int_{t_0}^{+\infty}e^{-\lambda t}g_{b}(t|x_0,t_0)dt$:
\begin{subequations}\label{LaplaceItoMcKean}
\begin{align}
g_{a}^{\lambda}(x_0)&=\frac{f_{T_b}^{\lambda}(x_0)f_{T_a}^{\lambda}(b)-f_{T_a}^{\lambda}(x_0)}{f_{T_a}^{\lambda}(b)f_{T_b}^{\lambda}(a)-1} \label{LaplaceOtoMcKeana}\\
g_{b}^{\lambda}(x_0)&=\frac{f_{T_a}^{\lambda}(x_0)f_{T_b}^{\lambda}(a)-f_{T_b}^{\lambda}(x_0)}{f_{T_a}^{\lambda}(b)f_{T_b}^{\lambda}(a)-1} \label{LaplaceOtoMcKeanb}
\end{align}
\end{subequations}
\begin{subequations}\label{LaplaceFortet2}
\begin{align}
g_{a}^{\lambda}(x_0)&=\frac{\left[1-\lambda F_X^{\lambda}(b|x_0)\right]F_X^{\lambda}(a|b)-\left[1-\lambda F_X^{\lambda}(b|b)\right]F_X^{\lambda}(a|x_0)}{\left[1-\lambda F_X^{\lambda}(b|a)\right]F_X^{\lambda}(a|b)-\left[1-\lambda F_X^{\lambda}(b|b)\right]F_X^{\lambda}(a|a)} \label{LaplaceFortet2a}\\
g_{b}^{\lambda}(x_0)&=\frac{\left[1-\lambda F_X^{\lambda}(b|x_0)\right]F_X^{\lambda}(a|a)-\left[1-\lambda F_X^{\lambda}(b|a)\right]F_X^{\lambda}(a|x_0)}{\left[1-\lambda F_X^{\lambda}(b|b)\right]F_X^{\lambda}(a|a)-\left[1-\lambda F_X^{\lambda}(b|a)\right]F_X^{\lambda}(a|b)} \label{LaplaceFortet2b}
\end{align}
\end{subequations}
\begin{subequations}\label{Laplacedens}
\begin{align}
g_{a}^{\lambda}(x_0)&=\frac{f_X^{\lambda}(x_1|x_0)f_X^{\lambda}(x_2|b)-f_X^{\lambda}(x_1|b)f_X^{\lambda}(x_2|x_0)}{f_X^{\lambda}(x_1|a)f_X^{\lambda}(x_2|b)-f_X^{\lambda}(x_1|b)f_X^{\lambda}(x_2|a)}\nonumber\\
&=\frac{v_1(\alpha,x_0)v_2(\alpha,b)-v_2(\alpha,x_0)v_1(\alpha,b)}{v_1(\alpha,a)v_2(\alpha,b)-v_2(\alpha,a)v_1(\alpha,b)} \label{Laplacedensa}\\
g_{b}^{\lambda}(x_0)&=\frac{f_X^{\lambda}(x_1|a)f_X^{\lambda}(x_2|x_0)-f_X^{\lambda}(x_1|x_0)f_X^{\lambda}(x_2|a)}{f_X^{\lambda}(x_1|a)f_X^{\lambda}(x_2|b)-f_X^{\lambda}(x_1|b)f_X^{\lambda}(x_2|a)}\nonumber\\
&=\frac{v_1(\alpha,a)v_2(\alpha,x_0)-v_2(\alpha,a)v_1(\alpha,x_0)}{v_1(\alpha,a)v_2(\alpha,b)-v_2(\alpha,a)v_1(\alpha,b)} \label{Laplacedensb}
\end{align}
\end{subequations}
 where 
 \begin{eqnarray*}
 F_X^{\lambda}(x|x_0)&=&\int_0^{+\infty}e^{-\lambda t}F_{X(t)}(x|x_0,t_0)dt\\
 f_X^{\lambda}(x|x_0)&=&\int_0^{+\infty}e^{-\lambda t}f_{X(t)}(x|x_0,t_0)dt\\
 f_{T_a}^{\lambda}(x_0)&=&\int_0^{+\infty}e^{-\lambda t}F_{T_a}(t|x_0,t_0)dt
 \end{eqnarray*}
and the functions $v_i(\alpha,x)$, $i=1,2$ are fundamental solutions of (8.13b) in \cite{RicSat90}.
\end{corollary}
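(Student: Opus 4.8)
The plan is to exploit that, for constant boundaries and a time-homogeneous $X$, every integral operator occurring in the relations already derived is of convolution type in $t$: by homogeneity $F_{X(t)}(\,\cdot\,|c,\tau)=F_{X(t-\tau)}(\,\cdot\,|c,t_0)$, and likewise for the densities. Hence the Laplace transform converts each such relation into an algebraic identity in which convolutions become products, and each relation collapses to a $2\times2$ linear system for the pair $\big(g_a^{\lambda}(x_0),g_b^{\lambda}(x_0)\big)$ that I can solve by Cramer's rule. I would produce each of the three representations from a different relation already available in the paper, and then deduce their equivalence from the uniqueness theorem proved just above.

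First I would transform the density identity \eqref{Fortetdensity}. Using time-homogeneity, the two integrals are convolutions, so the transform yields the master relation $f_X^{\lambda}(x|x_0)=f_X^{\lambda}(x|a)\,g_a^{\lambda}(x_0)+f_X^{\lambda}(x|b)\,g_b^{\lambda}(x_0)$, valid for every $x\notin(a,b)$. Evaluating it at two exterior points $x_1<a$ and $x_2>b$ gives two scalar equations; solving by Cramer's rule produces the first line of \eqref{Laplacedensa}. To reach the second line I would insert the resolvent representation of the transform of the transition density through the fundamental solutions $v_1,v_2$ of equation $(8.13b)$ in \cite{RicSat90}, namely $f_X^{\lambda}(x|y)\propto v_1(\lambda,x\wedge y)\,v_2(\lambda,x\vee y)$ with a normalizing (Wronskian) constant. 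The choice $x_1<a\le y\le b<x_2$ makes the two arguments separate, so the common factors $v_1(\lambda,x_1)$, $v_2(\lambda,x_2)$ and the normalizing constant cancel between numerator and denominator, leaving exactly the $v_i$-ratio of \eqref{Laplacedens}. Integrating the same master relation over $[b,\infty)$ and $(-\infty,a]$ — or, equivalently, transforming the first-kind system \eqref{Fortet2} directly — gives \eqref{LaplaceFortet2}; the blocks $1-\lambda F_X^{\lambda}$ arise from the transform of $\partial_t F_{X(t)}$ together with the limit conditions \eqref{limit Fa}--\eqref{limit Fb}.

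Next I would obtain \eqref{LaplaceItoMcKean} from a first-passage decomposition. By the strong Markov property, the unconstrained hitting time of $a$ decomposes according to whether $b$ is reached first: $f_{T_a}(\cdot|x_0)=g_a(\cdot|x_0)+\big(g_b(\cdot|x_0)\ast f_{T_a}(\cdot|b)\big)$, and symmetrically $f_{T_b}(\cdot|x_0)=g_b(\cdot|x_0)+\big(g_a(\cdot|x_0)\ast f_{T_b}(\cdot|a)\big)$. Transforming and solving the resulting $2\times2$ system yields \eqref{LaplaceItoMcKean}, up to the elementary relation $\int_0^{\infty}e^{-\lambda t}F\,dt=\lambda^{-1}\int_0^{\infty}e^{-\lambda t}F'\,dt$ that reconciles the transform of a law with that of its density.

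Finally, for the equivalence: since the system \eqref{Fortet2} has, by the theorem just proved, a unique continuous solution $(g_a,g_b)$, its Laplace transform $(g_a^{\lambda},g_b^{\lambda})$ is uniquely determined; each of the three closed forms is a representation of this same pair, and they therefore coincide. The step I expect to be the main obstacle is the passage from the $f_X^{\lambda}$-ratio to the $v_i$-ratio: one must justify the factorized resolvent representation through the fundamental solutions and verify the clean cancellation of the Wronskian normalization, while in parallel handling with care the boundary and limit terms that produce the $1-\lambda F_X^{\lambda}$ combination when the first-kind system is transformed.
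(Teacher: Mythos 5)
Your proposal is correct and matches the paper's own proof essentially step for step: the paper obtains \eqref{LaplaceItoMcKean} by generalizing the two-boundary first-passage decomposition of p.~30 of \cite{IM} (which is precisely your strong-Markov convolution system), obtains \eqref{LaplaceFortet2} by Laplace-transforming the first-kind system \eqref{Fortet2} via the convolution theorem, and obtains \eqref{Laplacedens} by transforming \eqref{Fortetdensity} at two exterior points and solving the resulting $2\times 2$ system, with the second equality coming from the fundamental-solution factorization (formula (8.22) of \cite{RicSat90}), exactly the cancellation you describe. The only, immaterial, difference is the closing step: you infer the equivalence of the three forms from uniqueness of $(g_a,g_b)$, whereas the paper additionally exhibits the explicit conversions between them using the identities $f_{T_a}^{\lambda}(x_0)=f_X^{\lambda}(x|x_0)/f_X^{\lambda}(x|a)$ and its pDf analogue from \cite{R}; both arguments are sound.
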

\begin{proof}
Generalizing the standard calculation of p. 30 in \cite{IM} for an arbitrary regular diffusion we obtain (\ref{LaplaceItoMcKean}).

\noindent Applying Laplace transform to (\ref{Fortet2}) and using the convolution theorem we get (\ref{LaplaceFortet2}), a result recently published in \cite{GNR}.

\noindent Applying Laplace transform to (\ref{Fortetdensity}) together with the convolution theorem for two generic points $x_1>b$ and $x_2<a$ we get the first equality in (\ref{Laplacedens}). The use of (8.22) in \cite{RicSat90} allows to get the second equality.

\noindent Furthermore, recalling that (cf. \cite{R})
	\[f_{T_a}^{\lambda}(x_1|x_0)=\frac{f_X^{\lambda}(x|x_0)}{f_X^{\lambda}(x|a)},
\]
the first equality in (\ref{Laplacedens}) becomes (\ref{LaplaceItoMcKean}).

\noindent Finally, remembering that (cf. \cite{R})
\begin{eqnarray}
f_{T_a}^{\lambda}(x_1|x_0)=\left\{
\begin{array}{lll}
\frac{1-\lambda F_X^{\lambda}(a|x_0)}{1-\lambda F_X^{\lambda}(a|a)} &\text{if} 	 &a>x_0\\
\frac{F_X^{\lambda}(a|x_0)}{F_X^{\lambda}(a|a)} &\text{if} &a<x_0
\end{array}\right.
\end{eqnarray}
the equations (\ref{LaplaceItoMcKean}) become (\ref{LaplaceFortet2}).
This implies that the three formulations are equivalent.
\end{proof}

\begin{remark}
The above results also hold for diffusion processes bounded by one or two reflecting boundaries when the diffusion interval is characterized by non natural boundaries, i.e. for Cox-Ingersoll-Ross whose diffusion interval is $I=[0,\infty)$ or for the reflected Brownian Motion.
\end{remark}

\section{Algorithms for $P(T_a\in dt,T_a<T_b)$ and $P(T_b\in dt,T_a>T_b)$}\label{Sect:4} 

In this section we describe two approaches to determine the density functions $g_{a}(t|x_0,t_0)$ and $g_{b}(t|x_0,t_0)$.

When the boundaries are constant the densities $g_{a}(t|x_0,t_0)$ and $g_{b}(t|x_0,t_0)$ are obtained from the Laplace transforms (\ref{LaplaceFortet2}) by inverting them numerically using, for example, Euler method \cite{Abate}.

Alternative methods become necessary when the boundaries $a(t)$ and $b(t)$ are time dependent or when the Laplace inversion presents numerical difficulties.  For example in the case of the Ornstein Uhlenbeck process the expression of $g_{a}^{\lambda}(x_0)$ and $g_{b}^{\lambda}(x_0)$ involve parabolic cylinder function (cf. \cite{GS1}). Their numerical inversion requests efforts specific for this instance. Furthermore there are processes for which $F_X^{\lambda}(a|x_0)$ and $F_X^{\lambda}(b|x_0)$ are not known in the literature. Their computation is possible however it requests the solution of specific second order differential equations (cf. \cite{R}).

Here we propose the following numerical method that can be applied both for constant and time depending boundaries.
Let us introduce a time discretization $t_i=t_0+ih$, $i=1,2,\ldots$ where $h$ is a positive constant. To determine the two pdf's $g_{a}(t|x_0,t_0)$ and $g_{b}(t|x_0,t_0)$ at the finite set of knots $t_i$ for $i=1,\ldots,n$, we use Euler method \cite{Li} to approximate the integrals on the r.h.s. of (\ref{Fortet2a}) and (\ref{Fortet2b}). Hence we get
	\begin{subequations}\label{Fortet2discret}
 \begin{align}
 1-F_{X(t_i)}(b(t_i)|x_0,t_0)&=\sum_{j=1}^i \left[1-F_{X(t_i)}(b(t_i)|a(t_j),t_j)\right]\hat{g}_{a}(t_j|x_0,t_0) h\label{Fortet2discreta}\\
 &+\sum_{j=1}^i \left[1-F_{X(t_i)}(b(t_i)|b(t_j),t_j)\right]\hat{g}_{b}(t_j|x_0,t_0)h \nonumber\\
 F_{X(t_i)}(a(t_i)|x_0,t_0)&=\sum_{j=1}^i F_{X(t_i)}(a(t_i)|a(t_j),t_j)\hat{g}_{a}(t_j|x_0,t_0) h\label{Fortet2discretb}\\
 &+ \sum_{j=1}^i F_{X(t_i)}(a(t_i)|b(t_j),t_j)\hat{g}_{b}(t_j|x_0,t_0)h.\nonumber
 \end{align}
\end{subequations}
The densities $g_{a}(t|x_0,t_0)$ and $g_{b}(t|x_0,t_0)$ can be evaluated in the knots $t_i$ for $i=1,\ldots,n$ by means of the following algorithm.

{\bf Step 1}
\begin{subequations}\label{step1}
\begin{align}
\hat{g}_{b}(t_1|x_0,t_0)&=\frac{2(1-F_{X(t_1)}(b(t_1)|x_0,t_0))}{h}\\
\hat{g}_{a}(t_1|x_0,t_0)&=\frac{2F_{X(t_1)}(a(t_1)|x_0,t_0)}{h}.
\end{align}
\end{subequations}

{\bf Step $i$, $i=2,3,\ldots$}
\begin{subequations}\label{step i 1}
\begin{align}
\hat{g}_{b}(t_i|x_0,t_0)&=\frac{2(1-F_{X(t_i)}(b(t_i)|x_0,t_0))}{h}\\
&-2\sum _{j=1}^{i-1}\left[(1-F_{X(t_i)}(b(t_i)|a(t_j),t_j))\hat{g}_{a}(t_j|x_0,t_0)\right.\nonumber\\
&+\left. (1-F_{X(t_i)}(b(t_i)|b(t_j),t_j))\hat{g}_{b}(t_j|x_0,t_0)  \right]\nonumber\\
\hat{g}_{a}(t_i|x_0,t_0)&=\frac{2F_{X(t_i)}(a(t_i)|x_0,t_0)}{h}\\
&-2\sum _{j=1}^{i-1}\left[F_{X(t_i)}(a(t_i)|a(t_j),t_j)\hat{g}_{a}(t_j|x_0,t_0)\right.\nonumber\\
&+ \left.F_{X(t_i)}(a(t_i)|b(t_j),t_j)\hat{g}_{b}(t_j|x_0,t_0) \right]\nonumber
\end{align}
\end{subequations}

where we used \eqref{limit Fa} and \eqref{limit Fb}.

\begin{remark}
The choice of equally spaced knots is motivated by the simplification of the notation but the method can be easily extended to non constant $h$.
\end{remark}
 
\begin{theorem}\label{error}
If constants $c_1$ and $c_2$ exist, such that for all $h>0$
\begin{eqnarray}
\max_{1<i<n, 0<j\leq i-1}, |F_{X(t_i)}(b(t_i)|a(t_j),t_j)-F_{X(t_{i-1})}(b(t_{i-1})|a(t_j),t_j)|\leq c_1 h\label{ip1}\\ 
\max_{1<i<n, 0<j\leq i-1}, |F_{X(t_i)}(a(t_i)|b(t_j),t_j)-F_{X(t_{i-1})}(a(t_{i-1})|b(t_j),t_j)|\leq c_2 h\label{ip2}
\end{eqnarray}
then the absolute value of the errors $\epsilon_{a,i}$ and $\epsilon_{b,i}$ of the proposed algorithm at the discretization knots $t_i$, $i = 1, 2,\ldots$
\begin{eqnarray*}
\epsilon_{a,i}&:=&\hat{g}_{a}(t_i|x_0,t_0)-g_{a}(t_i|x_0,t_0)\\
\epsilon_{b,i}&:=&\hat{g}_{b}(t_i|x_0,t_0)-g_{b}(t_i|x_0,t_0)
\end{eqnarray*}
are $O(h)$.
\end{theorem}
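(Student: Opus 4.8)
The plan is to convert the \emph{first-kind} recursion encoded in (\ref{Fortet2discret}) into a stable \emph{second-kind} recursion for the errors and then to close the argument with a discrete Gronwall inequality. First I would substitute the exact densities $g_a,g_b$ into the scheme (\ref{Fortet2discret}) and define the residuals $R^{(1)}_i,R^{(2)}_i$ as the differences between the right-rectangle sums and the exact integrals in (\ref{Fortet2a})–(\ref{Fortet2b}); by construction these are precisely the composite quadrature errors, with the diagonal nodes $j=i$ evaluated through the limits (\ref{limit Fa})–(\ref{limit Fb}). Subtracting the scheme written for $\hat g_a,\hat g_b$ then produces, for each $i$, a pair of linear relations of first-kind type in $(\epsilon_{a,j},\epsilon_{b,j})_{j\le i}$ whose history coefficients $K^{(1)}_a(t_i,t_j)=1-F_{X(t_i)}(b(t_i)|a(t_j),t_j)$, $K^{(1)}_b$, $K^{(2)}_a$, $K^{(2)}_b$ are only $O(1)$ in $h$; bounding these sums directly is hopeless, exactly as for any first-kind Volterra discretization.

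The decisive step is to subtract the relation at index $i-1$ from the one at index $i$. The diagonal contributions collapse, via (\ref{limit Fa}) and (\ref{limit Fb}), to $\tfrac{h}{2}\epsilon_{b,i}$ and $\tfrac{h}{2}\epsilon_{a,i}$ (the cross diagonal entries vanish, the same-boundary ones equal $\tfrac12$), while the surviving history terms are weighted by the \emph{time-differenced} kernels $\Delta K(t_i,t_j):=K(t_i,t_j)-K(t_{i-1},t_j)$. Dividing by $h/2$ yields a genuine second-kind recursion
\[
|\epsilon_{b,i}|\le \frac{2|R^{(1)}_i-R^{(1)}_{i-1}|}{h}+2\sum_{j=1}^{i-1}\Big(|\Delta K^{(1)}_a(t_i,t_j)|\,|\epsilon_{a,j}|+|\Delta K^{(1)}_b(t_i,t_j)|\,|\epsilon_{b,j}|\Big),
\]
and symmetrically for $|\epsilon_{a,i}|$. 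Here the cross kernels are controlled exactly by the hypotheses: $|\Delta K^{(1)}_a|\le c_1 h$ and $|\Delta K^{(2)}_b|\le c_2 h$ by (\ref{ip1})–(\ref{ip2}), so these contributions take the Gronwall form $c\,h\sum_{j<i}\bar\epsilon_j$ with $\bar\epsilon_j:=\max(|\epsilon_{a,j}|,|\epsilon_{b,j}|)$.

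The main obstacle is the two \emph{same-boundary} kernels $\Delta K^{(1)}_b$ and $\Delta K^{(2)}_a$, which the hypotheses do not bound and which fail to be $O(h)$ near the diagonal: a short-time Gaussian expansion of the transition law gives $F_{X(t)}(b(t)|b(\tau),\tau)=\tfrac12+O(\sqrt{t-\tau})$ as $\tau\uparrow t$, so the adjacent entries are only $O(\sqrt h)$ and a naive Gronwall estimate would amplify the error by an $O(1)$ factor at each of the $O(1/h)$ steps. I would instead estimate them as a \emph{discrete weakly singular kernel}, $|\Delta K^{(1)}_b(t_i,t_j)|,\,|\Delta K^{(2)}_a(t_i,t_j)|\le C\,h\,(t_i-t_j)^{-1/2}$, whose weighted row sums $h\sum_{j<i}(t_i-t_j)^{-1/2}$ telescope to an $O(1)$ bound on any finite interval $[t_0,T]$. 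The error recursion then has the mixed form $\bar\epsilon_i\le \tilde f_i + L\,h\sum_{j<i}\bar\epsilon_j + C\,h\sum_{j<i}(t_i-t_j)^{-1/2}\,\bar\epsilon_j$, where $\tilde f_i$ collects $\tfrac{2}{h}|R^{(1)}_i-R^{(1)}_{i-1}|$ and $\tfrac{2}{h}|R^{(2)}_i-R^{(2)}_{i-1}|$.

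To this I would apply the generalized discrete Gronwall lemma for weakly singular kernels (Theorem 3.11 of \cite{Li}, as in \cite{BNR}), obtaining $\bar\epsilon_i\le C^{\ast}\max_{k\le i}\tilde f_k$ with $C^{\ast}$ independent of $h$. The final, and equally delicate, ingredient is the consistency estimate $\tilde f_i=O(h)$, i.e. $|R^{(\cdot)}_i-R^{(\cdot)}_{i-1}|=O(h^2)$: differencing the quadrature error across one step cancels the leading $O(h)$ term of the rectangle rule, and what remains is controlled using the regularity of $g_a,g_b$ away from $t_0$ together with the same $\sqrt{\cdot}$ short-time asymptotics at the diagonal that furnished the weakly singular bound. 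Combining the Gronwall estimate with $\tilde f_i=O(h)$ yields $\epsilon_{a,i},\epsilon_{b,i}=O(h)$, which is the assertion; I expect the interplay between this consistency estimate and the weak singularity of the same-boundary kernels to be the hardest part to make fully rigorous.
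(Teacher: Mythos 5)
Your proposal has the same skeleton as the paper's own proof: insert the exact $g_a,g_b$ into the discrete scheme, subtract to obtain first-kind relations for the errors, difference consecutive indices so that the diagonal terms collapse through \eqref{limit Fa}--\eqref{limit Fb} into $\tfrac{h}{2}\epsilon_{b,i}$ and $\tfrac{h}{2}\epsilon_{a,i}$, divide by $h/2$, and close with a discrete Gronwall inequality driven by $\tfrac{2}{h}|\delta(h,t_i)-\delta(h,t_{i-1})|$; this is precisely the chain (\ref{error1})--(\ref{error4}). The genuine divergence is in the treatment of the two \emph{same-boundary} history kernels, and there your version is the more defensible one. The paper bounds the whole history sum by $(c_1+c_2)h\sum_{j<i}|\xi_j|$ and applies the standard discrete Gronwall lemma (Theorem 7.1 of \cite{Li}); but the hypotheses \eqref{ip1}--\eqref{ip2} control only the two cross kernels, while (\ref{error4}) also contains $F_{X(t_i)}(b(t_i)|b(t_j),t_j)-F_{X(t_{i-1})}(b(t_{i-1})|b(t_j),t_j)$ and its $a$-analogue, for which nothing is assumed. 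As you observe, these are generically \emph{not} $O(h)$ near the diagonal: $F_{X(t)}(b(t)|b(\tau),\tau)=\tfrac12+c(\tau)\sqrt{t-\tau}+O(t-\tau)$ with $c(\tau)$ proportional to $b'(\tau)-\mu(b(\tau))$, so the entry $j=i-1$ is of order $\sqrt h$ (nonzero for the paper's own Ornstein--Uhlenbeck examples with constant boundaries, though it vanishes for the driftless Brownian motion). The paper's Gronwall step thus silently uses an assumption that is neither stated nor generally true; your weakly singular bound $Ch(t_i-t_j)^{-1/2}$ plus a generalized discrete Gronwall lemma is the correct repair. (Citation slip only: the relevant result in \cite{Li} is the one for discretized weakly singular Volterra equations, not Theorem 3.11, which the paper invokes for well-posedness of the continuous system.)

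On the consistency step, which you rightly flag as the hard part: the paper simply asserts $|\delta_1|=|\delta_2|=O(h^2)$, which is not correct as stated (the composite rectangle rule is only $O(h)$ globally), and its closing claim ``$|\xi_i|=O(h^2)$'' even contradicts the $O(h)$ thesis; what is actually needed is $|\delta(h,t_i)-\delta(h,t_{i-1})|=O(h^2)$. Your hoped-for cancellation can be substantiated: for the model kernel $\tfrac12-c\sqrt{t_i-\tau}$ an Euler--Maclaurin type computation gives
\begin{equation*}
\delta(h,t_i)=-\tfrac{h}{2}\bigl[\tfrac12 g(t_i)-K(t_i,t_0)g(t_0)\bigr]+c\,g(t_i)\,\zeta(-\tfrac12)\,h^{3/2}+O\bigl(h^{2}(t_i-t_0)^{-1/2}\bigr),
\end{equation*}
i.e.\ the $O(h)$ and $O(h^{3/2})$ terms have coefficients that are \emph{smooth functions of $t_i$}, so differencing in $i$ gains a factor $h$, the $h^{3/2}$ constants cancel, and the residual $O(h^{2}(t_i-t_0)^{-1/2})$ terms are harmless away from $t_0$ and are weighted near $t_0$ by $g_a,g_b\approx 0$. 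So your plan does close, and it proves the theorem under hypotheses closer to the ones actually stated; the paper's shorter argument is valid only under the unstated strengthening of \eqref{ip1}--\eqref{ip2} to all four kernels, which forces $c(\tau)\equiv 0$ and excludes the Ornstein--Uhlenbeck examples of Section \ref{Sect:5}.
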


\begin{proof}
The Euler method applied to (\ref{Fortet2}) gives
\begin{subequations}\label{error1}
\begin{align}
  1-F_{X(t_i)}(b(t_i)|x_0,t_0)&=\sum_{j=1}^i \left[1-F_{X(t_i)}(b(t_i)|a(t_j),t_j)\right]g_{a}(t_j|x_0,t_0) h\label{error1a}\\
  &+\sum_{j=1}^i \left[1-F_{X(t_i)}(b(t_i)|b(t_j),t_j)\right]g_{b}(t_j|x_0,t_0)h \nonumber\\
 &+\delta_1(h,t_n)\nonumber\\
 F_{X(t_i)}(a(t_i)|x_0,t_0)&=\sum_{j=1}^i F_{X(t_i)}(a(t_i)|a(t_j),t_j)g_{a}(t_j|x_0,t_0) h\label{error1b}\\
 &+ \sum_{j=1}^i F_{X(t_i)}(a(t_i)|b(t_j),t_j)g_{b}(t_j|x_0,t_0)h\nonumber\\
 &+\delta_2(h,t_n).\nonumber
\end{align}
\end{subequations}
where $\delta_1(h,t_i)$ and $\delta_2(h,t_i)$ are the differences between the integrals on the r.h.s. of (\ref{Fortet2}) and the finite sums on the r.h.s. of (\ref{error1}).

On subtracting \eqref{error1a} from \eqref{Fortet2discreta} and \eqref{error1b} from \eqref{Fortet2discretb}we get
\begin{subequations}\label{error2}
\begin{align}
  \delta_1(h,t_i)&=h\sum_{j=1}^i \left[1-F_{X(t_i)}(b(t_i)|a(t_j),t_j)\right]\epsilon_{a,j} +h\sum_{j=1}^i \left[1-F_{X(t_i)}(b(t_i)|b(t_j),t_j)\right]\epsilon_{b,j}\\
\delta_2(h,t_i)&=h\sum_{j=1}^i F_{X(t_i)}(a(t_i)|a(t_j),t_j)\epsilon_{a,j}+ h\sum_{j=1}^i F_{X(t_i)}(a(t_i)|b(t_j),t_j)\epsilon_{b,j}.
\end{align}
\end{subequations}
Differencing \eqref{error2} and recalling \eqref{limit Fa} and \eqref{limit Fb} we obtain
\begin{subequations}\label{error3}
\begin{align}
  \delta_1(h,t_i)-\delta_1(h,t_{i-1})&=\frac{h}{2}\epsilon_{b,i}+ h \sum_{j=1}^{i-1} \left(\left[1-F_{X(t_i)}(b(t_i)|a(t_j),t_j)\right]-\left[1-F_{X(t_{i-1})}(b(t_{i-1})|a(t_j),t_j)\right]\right)\epsilon_{a,j}\nonumber\\ &+h\sum_{j=1}^{i-1} \left(\left[1-F_{X(t_i)}(b(t_i)|b(t_j),t_j)\right]-\left[1-F_{X(t_{i-1})}(b(t_{i-1})|b(t_j),t_j)\right]\right)\epsilon_{b,j}\\
\delta_2(h,t_i)-\delta_2(h,t_{i-1})&=\frac{h}{2}\epsilon_{a,i}+ h\sum_{j=1}^{i-1} \left(F_{X(t_i)}(a(t_i)|a(t_j),t_j)-F_{X(t_{i-1})}(a(t_{i-1})|a(t_j),t_j)\right)\epsilon_{a,j}\nonumber\\
&+ \sum_{j=1}^{i-1} \left(F_{X(t_i)}(a(t_i)|b(t_j),t_j)-F_{X(t_{i-1})}(a(t_{i-1})|b(t_j),t_j)\right)\epsilon_{b,j}
\end{align}
\end{subequations}
that can be rewritten as
\begin{subequations}\label{error4}
\begin{align}
\epsilon_{b,i}&=-2\sum_{j=1}^{i-1} \left(\left[1-F_{X(t_i)}(b(t_i)|a(t_j),t_j)\right]-\left[1-F_{X(t_{i-1})}(b(t_{i-1})|a(t_j),t_j)\right]\right)\epsilon_{a,j}\\
&-2\sum_{j=1}^{i-1} \left(\left[1-F_{X(t_i)}(b(t_i)|b(t_j),t_j)\right]-\left[1-F_{X(t_{i-1})}(b(t_{i-1})|b(t_j),t_j)\right]\right)\epsilon_{b,j}\nonumber\\
&+\frac{2}{h}\left( \delta_1(h,t_i)-\delta_1(h,t_{i-1})\right)\nonumber\\
\epsilon_{a,i}&=-2\sum_{j=1}^{i-1} \left(F_{X(t_i)}(a(t_i)|a(t_j),t_j)-F_{X(t_{i-1})}(a(t_{i-1})|a(t_j),t_j)\right)\epsilon_{a,j}\\
&-2\sum_{j=1}^{i-1} \left(F_{X(t_i)}(a(t_i)|b(t_j),t_j)-F_{X(t_{i-1})}(a(t_{i-1})|b(t_j),t_j)\right)\epsilon_{b,j}\nonumber\\
&+\frac{2}{h}\left(\delta_2(h,t_i)-\delta_2(h,t_{i-1})\right)\nonumber.
\end{align}
\end{subequations}
Let us now consider the global error
\begin{equation}
\xi_i=|\epsilon_{a,i}|+|\epsilon_{b,i}|.
\end{equation}
When the hypotheses \eqref{ip1} and \eqref{ip2} are fulfilled
\begin{eqnarray}
|\xi_i|&\leq&\left|(c_1+c_2)h\right|\sum_{j=1}^{i-1}\left|\xi_j\right|\\
&+&\frac{2}{h}\left| |\delta_1(h,t_i)-\delta_1(h,t_{i-1})|+|\delta_2(h,t_i)-\delta_2(h,t_{i-1})|\right|\nonumber.
\end{eqnarray}
Observing that Euler method errors are $|\delta_1(h,t)|=|\delta_2(h,t)|=O(h^2)$ and applying Theorem 7.1 of \cite{Li} we get $|\xi_i|=O(h^2)$ and hence the thesis.
\end{proof}

\begin{remark}
A better result on the errors can be obtained by improving the integral discretization rule, i.e. using the midpoint formula instead of Euler method. Other integration rules can improve the order of the error but strongly increase the computational complexity of the algorithm.
\end{remark}

\begin{remark}
The two methods are equivalent in terms of computational time when the Laplace transform expression is a well behaved function. Nevertheless, the generalization of the method for a time dependent boundary $S(t)$ is possible only for the numerical method.
\end{remark}

\section{Examples}\label{Sect:5} 

In this section we discuss a set of examples of interest for the applications, i.e. standard Brownian motion, Geometric Brownian motion, Ornstein Uhlenbeck process. We apply the algorithms of Section \ref{Sect:4} for numerical evaluations. When the joint densities are known in closed form, we use them to illustrate the reliability of the algorithms. 

\subsection{Standard Brownian motion}\label{SubSect:1} 

Let us consider a standard Brownian motion with constant boundaries. It is a time and space homogeneous diffusion process, hence we can rewrite its joint density functions (\ref{densit1}) and (\ref{densit2}) in closed form as

\begin{enumerate}
\item[i)] If $x_0<a<b$ or $b<a<x_0$ then
	\begin{equation}\label{densit1BM}
 f_{T_{a} T_{b}}\left(t,s\right)=\left\{
 \begin{array}{ll}
 0 &t\geq s \\
 f_{T_{a}}(t|0,0)f_{T_{b-a}}(s-t|0,0)  &t<s  \\
 \end{array} .\right.
 \end{equation}

	\item[ii)] If $a<x_0<b$ then
\begin{equation}\label{densit2BM}
f_{T_{a} T_{b}}\left(t,s\right)=\left\{
\begin{array}{ll}
f_{T_{b-a}}(s-t|0,0) g_{a}(t|0,0)  &t<s  \\
0 &t=s \\
f_{T_{a-b}}(t-s|0,0) g_{b}(s|0,0)  &t>s \\
\end{array}
.\right.
\end{equation}
\end{enumerate} 
where $g_{a}(t|x_0,t_0)$ and $g_{b}(t|x_0,t_0)$ are given by (\ref{Wiener}) and $f_{T_{a}}(t|0,0)$ is given by (\ref{T_a pdf}).

Figure \ref{Fig:Wiener} shows the joint pdf of the first hitting times of a standard Brownian motion through two constant boundaries $b=-a=1$ and the corresponding copula together with the contour lines.
Figures \ref{Fig:Wiener2} illustrates the case of constant boundaries $a=-1$ and $b=1.5$ asymmetric with respect to $x_0$. The asymmetry of the boundaries location determines peaks of different height. Note that the maximum of the joint density have inverted height in the corresponding copula.

Furthermore the Laplace transforms of $g_{a}^{\lambda}(x_0)$ and $g_{b}^{\lambda}(x_0)$ can be found in \cite{Borod} or applying  Corollary \ref{Cor Laplace}.

\begin{remark}
The stability of the algorithms introduced in Section \ref{Sect:4}, already proved by Theorem \ref{error}, is confirmed by the standard Brownian motion case where the pdf's $g_{a}(t|x_0,t_0)$ and $g_{b}(t|x_0,t_0)$ are available in closed form. We apply the algorithms to the standard Brownian motion with constant boundaries $a=-1$ and $b=2$ with discretization step $h=0.01$ and we compare the results with the closed form densities (\ref{Wiener}) with the series truncated to $N=10^3$ steps. The inversion of the Laplace transform with the Euler method gives a mean square deviation $MSE_a= 6.02\cdot10^{-19}$ and $MSE_b=9.83\cdot10^{-20}$. The numerical algorithm gives a mean square deviation $MSE_a=3.23\cdot10^{-6}$ and $MSE_b=5.11\cdot10^{-8}$. It confirms the reliability of the new algorithm. The higher precision of the Laplace inversion with respect to the numerical method is determined by the simple expression of the involved Laplace transforms. However we cannot infer an analogous property for the other diffusions.
\end{remark}

\begin{remark} \label{Wmu}
The extension of the above results to a Brownian motion with diffusion coefficient $\sigma \neq 1$ is straightforward. Indeed, a Brownian motion with diffusion coefficient $\sigma$ can be transformed in a standard Brownian motion via the space transformation $x=x'/ \sigma$ and the boundaries $a$ and $b$ becomes $a/ \sigma$ and $b/ \sigma$ respectively. 
When $\mu>0$ and $a(0)<x_0<b(0)$ one can determine $g_b(t|x_0,t_0)$. In this case the crossing of the boundary $a(t)$ is not a sure event and the study of $g_a(t|x_0,t_0)$ requests a suitable normalization.
Similarly the case of $\mu<0$ and $a(0)<x_0<b(0)$ is analogous interchanging the role of the two boundaries.
\end{remark}

\begin{remark} 
Indicating with $C^{\sigma}_{T_a,T_b}$ the copula of $(T_a,T_b)$ for a Brownian motion with diffusion coefficient $\sigma$ and with $C_{T_a,T_b}$ the copula in the case $\sigma=1$, recalling the transformation $x=x'/ \sigma$, the relationship $C^{\sigma}_{T_a,T_b}=C_{T_{a/\sigma},T_{b/\sigma}}$ holds.
Geometric Brownian motion can be obtained by a standard Brownian motion via the space transformation $x'=\exp(\sigma x)$. The corresponding copula, $C^{GBM}_{T_a,T_b}$, is related with the copula of the standard Brownian motion through $C^{GBM}_{T_a,T_b}=C_{T_{\ln a /\sigma},T_{\ln a /\sigma }}$.

The more general transformation $x'=\exp(\mu t+\sigma x)$ is not interesting from the point of view of the exit times from a strip because it corresponds to transform the process into a Brownian motion with drift that has not a sure crossing, as stated in Remark \ref{Wmu}.
\end{remark}

\begin{figure}[htp]
\centering
\includegraphics[height=10cm]{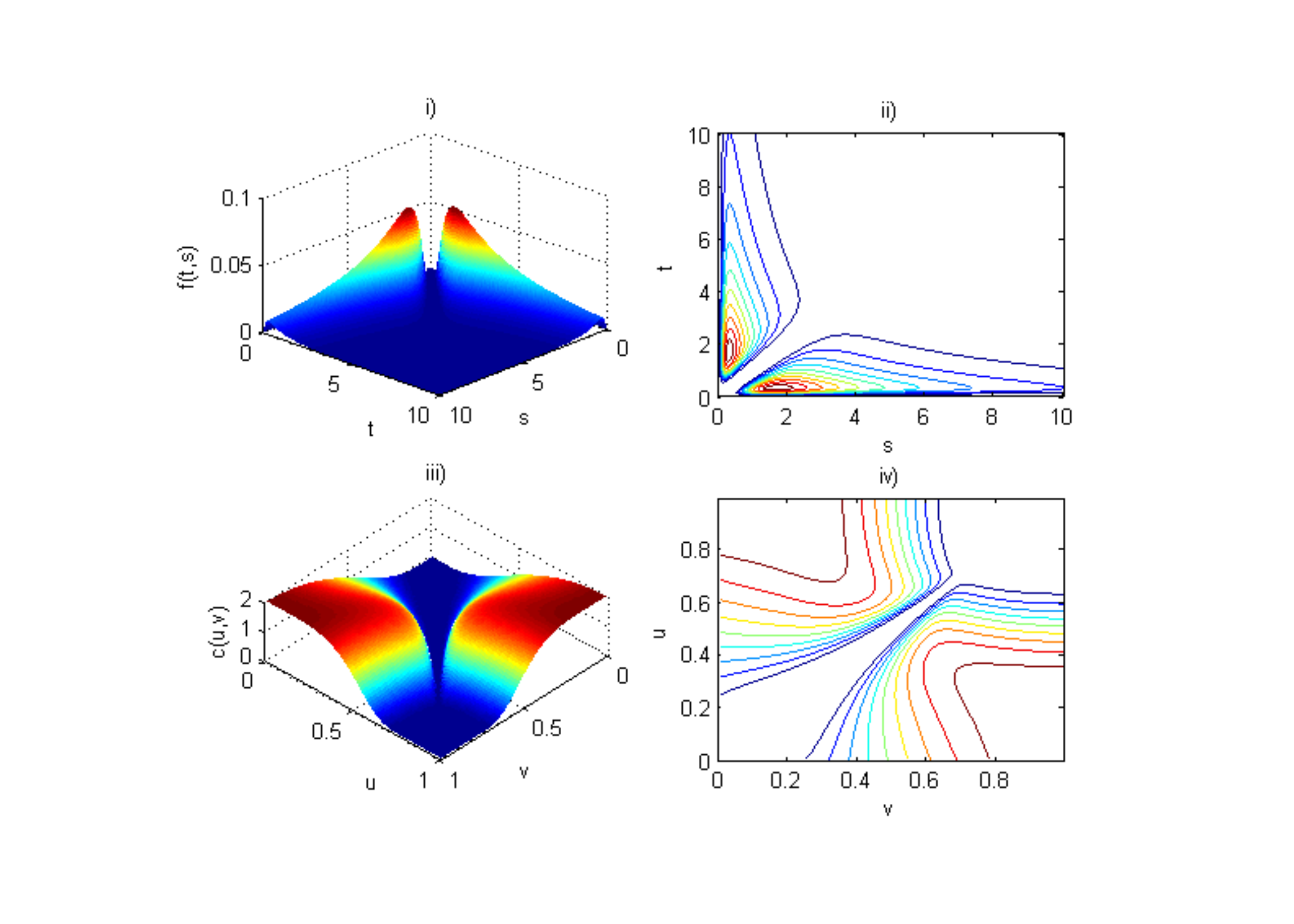}
\caption{First hitting times of a Brownian motion through two constant boundaries $b=-a=1$: i) Joint pdf ii) Contour lines of the joint pdf iii) Density of the copula iv) Contour lines of the density of the copula.}
\label{Fig:Wiener}       
\end{figure}

\begin{figure}[htp]
\centering
\includegraphics[height=10cm]{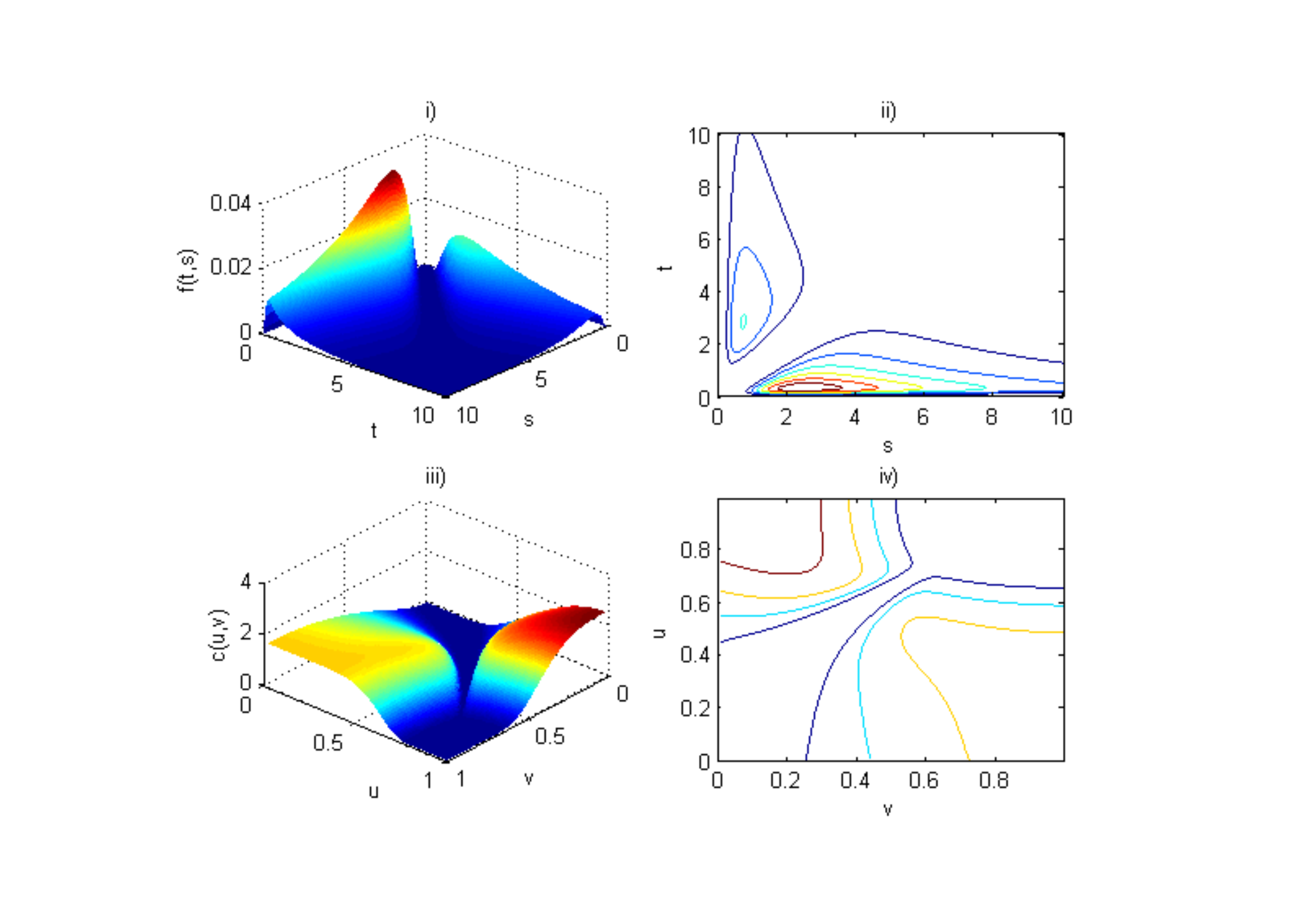}
\caption{First hitting times of a Brownian motion through two constant boundaries $a=-1$ and $b=1.5$: i) Joint pdf ii) Contour lines of the joint pdf iii) Density of the copula iv) Contour lines of the density of the copula.}
\label{Fig:Wiener2}       
\end{figure}

As a further example we consider a standard Brownian motion with the following boundaries $b(t)=1+0.1 \cos (\pi t)$ and $a(t)=-1 +0.1 \cos (\pi t+\pi)$. Since the boundaries are time dependent, Laplace transform inversions cannot be applied. Figure \ref{Fig:Wiener_osc} shows the joint pdf of the first hitting times and the corresponding contour lines obtained with the proposed numerical algorithm.

\begin{figure}[htp]
\centering
\includegraphics[height=8cm]{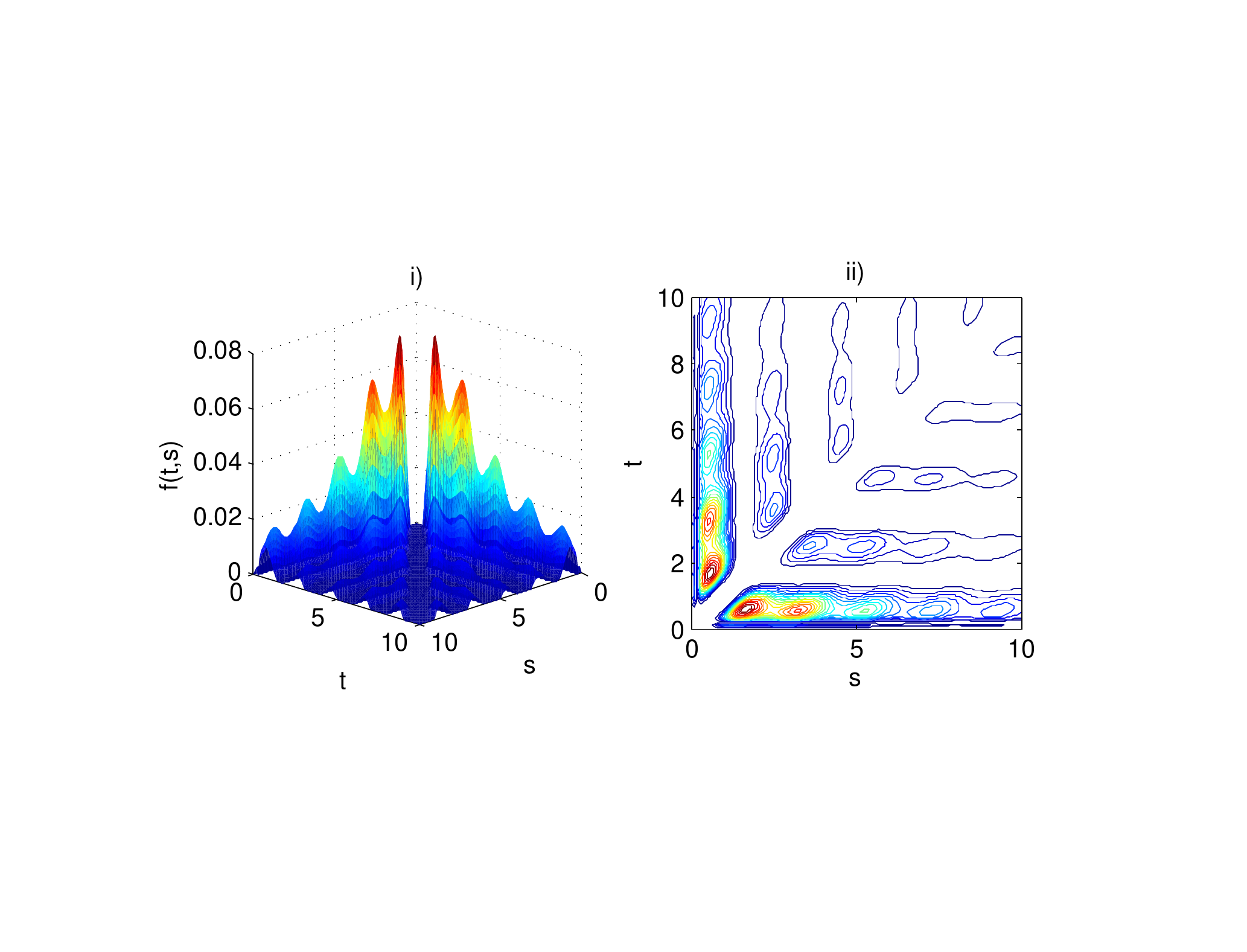}
\caption{First hitting times of a Brownian motion through two time dependent boundaries $b(t)=1+0.1 \cos (\pi t)$ and $a(t)=-1 +0.1 \cos (\pi t+\pi)$: i) Joint pdf ii) Contour lines of the joint pdf .}
\label{Fig:Wiener_osc}       
\end{figure}

\subsection{Ornstein Uhlenbeck Process}\label{SubSect:3} 

Consider as a further example the Ornstein Uhlenbeck process, described by the stochastic differential equation

\begin{eqnarray}\label{OU}
dX(t)&=&\left(-\frac{X(t)}{\theta}+\mu\right)dt+\sigma dW_t\\
X(t_0)&=&x_0.
\end{eqnarray}

For this process representations and numerical methods are available and can be used to evaluate the first hitting time pdf $f_{T_a}(t|x_0,t_0)$ \cite{APP,BNR, L}. On the other side, the density $g_a(t|x_0,t_0)$ is not known in closed form. Here we have applied classical numerical algorithms (cf. \cite{BNR}) to evaluate the first hitting time pdf and the algorithms of Section 4 to compute the second.
Figure \ref{Fig:OU} shows the joint pdf and the corresponding copula of the first hitting times of an Ornstein Uhlenbeck process with parameter $\theta=10$, $\mu=0$, $\sigma=1$ and $x_0=0$ through two constant boundaries $b=-a=1$.
Figures \ref{Fig:OU2} illustrates the case of asymmetric w.r.t. $x_0$ constant boundaries $a=-1$ and $b=1.5$. Note that $x_0$ represents the symmetry axis of the Ornstein Uhlenbeck process. The height of the peaks of the joint density and of the copula behaves as the Brownian motion case. 

\begin{figure}[htp]
\centering
\includegraphics[height=10cm]{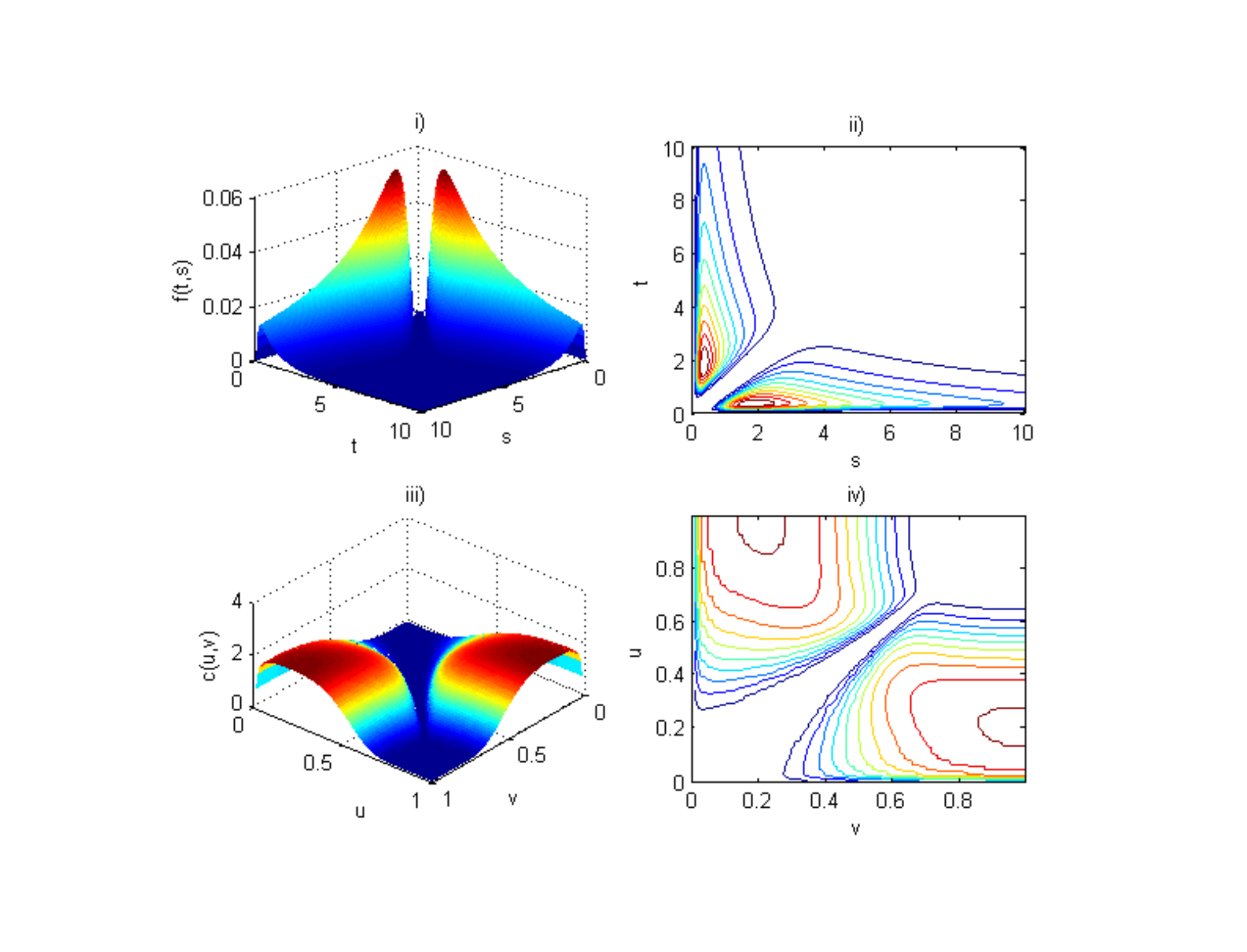}
\caption{First hitting times of an Ornstein Uhlenbeck process through two constant boundaries $b=-a=1$: i) Joint pdf ii) Contour lines of the joint pdf iii) Density of the copula iv) Contour lines of the density of the copula.}
\label{Fig:OU}       
\end{figure}
\begin{figure}[htp]
\centering
\includegraphics[height=10cm]{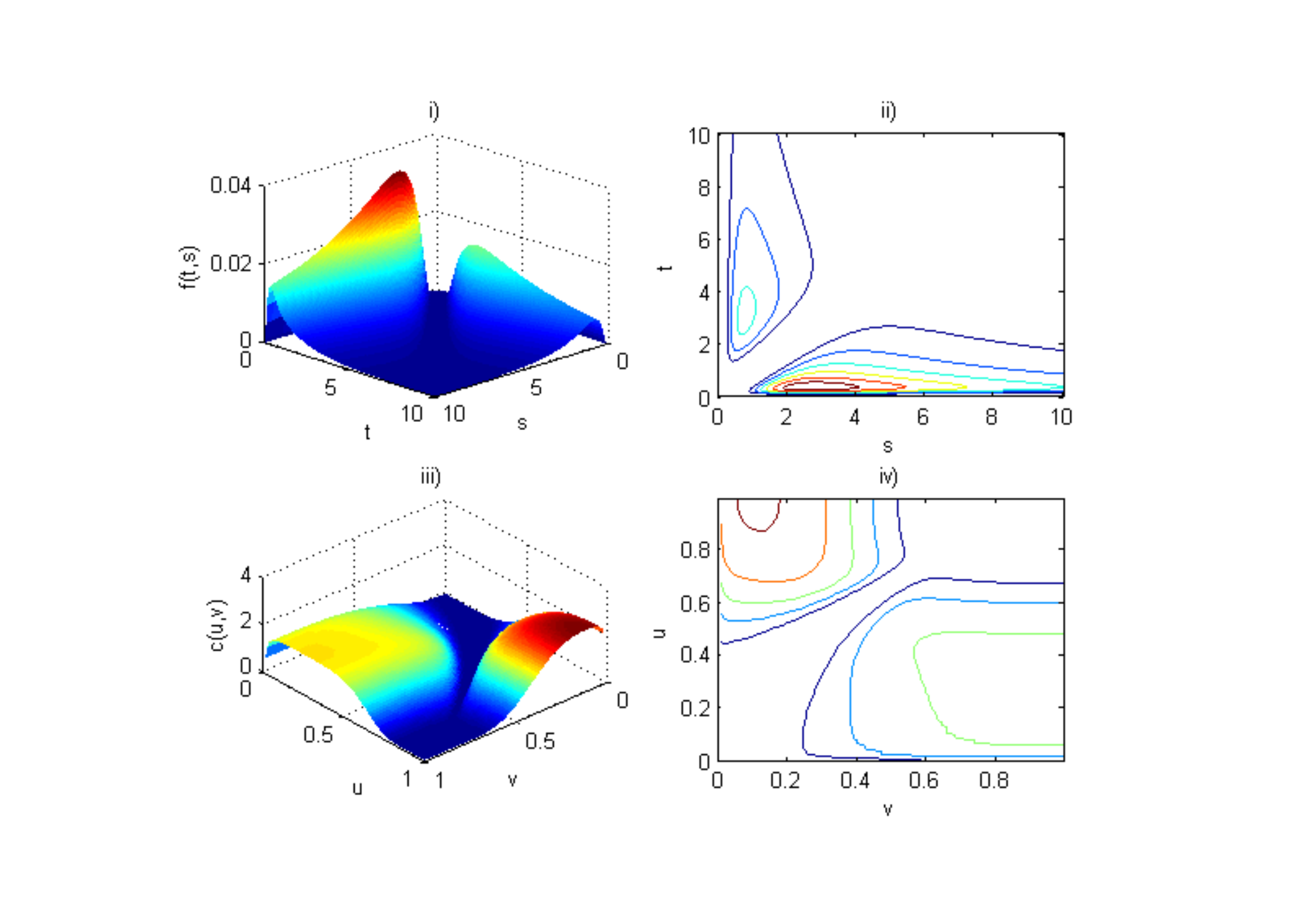}
\caption{First hitting times of an Ornstein Uhlenbeck process through two constant boundaries $a=-1$ and $b=1.5$: i) Joint pdf ii) Contour lines of the joint pdf iii) Density of the copula iv) Contour lines of the density of the copula.}
\label{Fig:OU2}       
\end{figure}
 
The Laplace transforms $g_{a}^{\lambda}(x_0)$ and $g_{b}^{\lambda}(x_0)$ for the OU process can be found in \cite{Borod}. However the presence of the parabolic cylinder functions in their expression discourage their numerical inversion. 
 
\acks We are grateful to an anonymous referee for his interesting and constructive comments to improve the paper.
Work supported in part by University of Torino Grant 2012 \lq \lq Stochastic Processes and their Applications\rq\rq and by project A.M.A.L.F.I. - Advanced Methodologies for the AnaLysis and management of the
Future Internet (Universit\`{a} di Torino/Compagnia di San
Paolo).

%

%

\end{document}